\documentclass[letter,11pt]{article}

\newcounter{conj}
\usepackage{pifont}

\usepackage{comment}
\usepackage[linktocpage=true]{hyperref}
\usepackage[margin=1.2 in, top=1 in, bottom= 1.2 in]{geometry}

\usepackage[english]{babel}
\usepackage{amsfonts}
\usepackage{mathrsfs}
\usepackage[mathscr]{euscript}
\usepackage{bbm}
\usepackage{latexsym}
\usepackage{math dots}
\usepackage{amssymb}
\usepackage{mathtools}

\usepackage{enumitem}
\setlist{nolistsep}
\usepackage{amsthm}
\usepackage{relsize}
\usepackage{nicefrac}
\usepackage[capitalize]{cleveref}

\newtheoremstyle{plain}{3mm}{3mm}{\slshape}{}{\bfseries}{.}{.5em}{}
\newtheoremstyle{definition}{2mm}{2mm}{}{}{\bfseries}{.}{.5em}{}
\theoremstyle{plain}
	
\newtheorem{theorem}{Theorem}

\newtheorem{conjecture}[conj]{Conjecture}

\theoremstyle{definition}

\theoremstyle{plain}
\newtheorem*{namedthm}{\namedthmname}
\newcounter{namedthm}
\makeatletter
	
\makeatother


\newcommand{\N}{\mathbb{N}}
\newcommand{\Z}{\mathbb{Z}}

\title{Difference sets in quadratic density Hales Jewett conjecture with 2 letters}
\author{Aritro Pathak}

\begin{document}

\maketitle

\begin{abstract}
 The Quadratic Density Hales Jewett conjecture with $2$ letters states that for large enough $n$, every dense subset of $\{0,1\}^{n^{2}}$ contains a combinatorial line where the wildcard set is of the form $\gamma \times \gamma$ where $\gamma \subset \{1,2,\dots n\}$. We show in an elementary quantitative way that every dense subset of $\{0,1\}^{n^{2}}$, for sufficiently large $n$,  contains two elements such that the set of coordinate points where they differ, which we term the difference set of these two elements, is of the form $\gamma_{1}\times \gamma_{2}$ where $\gamma_1, \gamma_2$ are both nonempty subsets of $\{1,2,\dots n\}$. Further we give several non-trivial examples of dense vector subspaces of $\{0,1\}^{n^{2}}$, where in each case the wildcard set of the combinatorial line that can be obtained has restrictions on its size and shape.
\end{abstract}

\section{Introduction}

Consider for any $n\in \Z_+$, the $n \times n$ grid where each entry $\{(i,j):1\leq i,j\leq n \}$ is filled with either 0 or 1, i.e the set $\{0,1\}^{n^{2}}$.

The following is the first case of a central open conjecture in Ramsey theory, as mentioned by Gowers\cite{Gowersblog}, and Bergelson. This question has been brought up again with renewed interest in a recent blog post by Kalai\cite{Kalaiblog}.

\begin{conjecture}[Quadratic Density Hales Jewett with 2 letters] \label{conj:conj1}
For any $0<\delta<1$, there exists $QDHJ(\delta)$ so that for any $n\geq QDHJ(\delta)$, for any subset $S\subset \{0,1\}^{n^2}$ with $|S|\geq \delta \cdot 2^{n^2}$, there exist two elements $s^{(0)}, s^{(1)}\in S$ such that the set $\{(i,j):s^{(0)}_{ij}\neq s_{ij}^{(1)}\}$ is the same where $\{ s^{(0)}_{ij}=0,s^{(1)}_{ij}=1\}$, where $\{(i,j)\in \gamma\times \gamma, \ \gamma\subset \{1,2,..,n\}\}$ with $\gamma$ nonempty. 
\end{conjecture}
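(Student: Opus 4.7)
The plan is to attack this conjecture through a density-increment strategy in the spirit of the Polymath proof of $\mathrm{DHJ}(k)$, adapted to respect the rectangular wildcard structure. Writing elements of $\{0,1\}^{n^2}$ as $n\times n$ $0/1$ matrices and observing that the desired pattern says $s^{(1)} - s^{(0)} = \mathbf{1}_\gamma \mathbf{1}_\gamma^{T}$ for a nonempty $\gamma \subseteq [n]$, I would first introduce a probability distribution on pairs $(M,\gamma)$ (say $\gamma$ a $p$-random subset of $[n]$ and $M$ uniform in $S$) and express the count of valid patterns as an average, over $\gamma$, of an inner product of $1_S$ with its image under the operator that zeroes out the $\gamma\times\gamma$ block and then resets it to all ones. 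The goal would be to show that this count strictly exceeds what two independent copies of $S$ would produce, so that a nontrivial line is forced.

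The key analytic step would be a Cauchy--Schwarz bound controlling the bilinear expression above by a higher-order uniformity norm of $1_S$, necessarily of $U^3$ type because the index set $\gamma\times\gamma$ has genuinely quadratic structure. A dichotomy then follows: in the quasirandom regime one counts patterns directly against the uniform measure, while in the non-quasirandom regime one extracts correlation of $1_S$ with a low-complexity (e.g. rank-$1$) matrix and translates this into a density increment onto a sub-grid indexed by $I\times I \subseteq [n]\times[n]$, after which the argument iterates with the same problem at a smaller scale.

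The main obstacle, which is precisely what makes this the ``first quadratic case'' of density Hales--Jewett, is the diagonal coupling of the two factors in $\gamma\times\gamma$. One cannot freely iterate on arbitrary combinatorial subspaces of $\{0,1\}^{n^2}$: any density increment must preserve the symmetric product form, so only subgrids indexed by $I\times I$ (with matched row and column indexing) are admissible, which severely cuts down the flexibility of the increment. I expect the development of quadratic Fourier technology on the symmetric tensor square of $\{0,1\}^n$, rather than on the ambient space itself, to be the genuine difficulty, and this is the step where I would anticipate the argument to stall. This expectation is consistent with the fact that the present paper retreats to the decoupled variant with wildcards $\gamma_1\times\gamma_2$, which is strictly weaker and where the two factors can plausibly be treated independently by a Ramsey/pigeonhole style argument on rows and columns.
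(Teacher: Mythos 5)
The statement you are addressing is \cref{conj:conj1}, and the paper does not prove it: it is presented throughout as an open problem (the first unknown case of polynomial density Hales--Jewett), and the paper's actual results are strictly weaker. \cref{thm:mainthm2} finds a pair of elements whose difference set has the \emph{decoupled} rectangular form $\gamma_1\times\gamma_2$, by an elementary pigeonhole count on the translates $\gamma\times\{i\}$, and \cref{thm:mainthm1} constructs dense subspaces that obstruct square wildcard sets of size not divisible by $4$. So there is no proof in the paper against which your argument can be checked; any complete argument you supplied would be a new result, not a reconstruction.

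Your proposal, as written, is not a proof either, and you say as much yourself. The Cauchy--Schwarz reduction of the pattern count to a $U^3$-type uniformity norm is asserted rather than carried out; the inverse step needed to convert non-quasirandomness into correlation with a rank-one structure, and then into a density increment on a symmetric subgrid $I\times I$, is exactly the technology that does not currently exist; and you explicitly predict the argument stalls there. Concretely, the gap is that the wildcard set $\gamma\times\gamma$ couples the row and column coordinates through the single set $\gamma$, so the averaging operator you define does not factor through any space on which existing quadratic Fourier analysis (built for $\mathbb{F}_2^n$ or $\mathbb{Z}/N\mathbb{Z}$, not for the symmetric square of the combinatorial cube) applies; and even granting the dichotomy, you have not verified that restricting to $I\times I$ preserves the hypothesis with a genuinely larger density, nor that the iteration terminates. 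Your diagnosis of where the difficulty lies is accurate and consistent with the paper's own retreat to the decoupled $\gamma_1\times\gamma_2$ problem, but the proposal should be read as a research program with its central lemma missing, not as a proof of the conjecture.
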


This is the quadratic base case with $k=2$ letters of the general Polynomial Density Hales Jewett conjecture stated later as \cref{conj:PDHJ}. To state this conjecture formally, we first introduce some notation, which we essentially borrow from Walters \cite{Walters}.

For any given $n\in \mathbb{Z}_{+}$, consider the set of words of length $n$, with each letter of the word being an element of $[k]:=\{1,2,..,k\}$. Formally this is written as $K=[k]^{n}$, and we also denote the set $\{1,2,\dots,n\}$ by $[n]$. For any $a\in K, \gamma \subset [n]$ and $1\leq x\leq k$ we denote by $b=a\oplus x\gamma$ the element of $[k]^{n}$ which is obtained by setting $b_i=x$ if $i \in \gamma$ and $b_i=a_i$ otherwise. A \textit{combinatorial line} is a set of the form $\{a\oplus x\gamma :1\leq x\leq k\}$. We call $\gamma$ the wildcard set for the combinatorial line. A \textit{k-dimensional combinatorial subspace} is a set of the form $\{a\oplus x_1 \gamma_1 \oplus x_2 \gamma_2 \dots \oplus x_k \gamma_k : 1\leq x_{i}\leq k, \forall 1\leq i\leq k \}$

In the context of the Polynomial Hales Jewett theorem, we are looking at not just a linear coordinate space such as $\{1,2,\dots,n\}$ but at $d$-dimensional coordinate grids, $\{1,2,\dots,n\}^{d}$ for all positive integers $d$. In these cases, for the set of words of length $n^d$ with $k$ letters, we use the symbol $[k]^{n^{d}}$. When $k=2$, for any two words $s_1,s_2 \in \{0,1\}^{n^{d}}$, we call the \textit{difference set} of $s_1,s_2$ the set of coordinate points where the words differ. 

We state the Polynomial Hales Jewett theorem in a form articulated by Walters \cite{Walters}, and later by McCutcheon\cite{McCutcheon}, and which is a generalization of the original theorem proven by Bergelson and Leibman\cite{Bergelson1}.

\begin{theorem}[Polynomial Hales Jewett:]\label{thm:polyhales}
 For positive integers $k,r$, there exists a positive integer $N(k,r)$ such that for all $n\geq N(k,r)$ whenever $K[n]:=[k]^{n} \times [k]^{n^{2}}\times \dots \times [k]^{n^{d}}$ is $r$ colored, there exists $a\in K[n]$ and $\gamma\subset [N]$ such that the set of points $\{a\oplus x_1\gamma \oplus x_2 (\gamma \times \gamma)\oplus \dots \oplus x_{d} \gamma^{d}:1\leq x_{i} \leq k\}$ is monochromatic.
\end{theorem}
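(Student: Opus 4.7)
The plan is to prove the Polynomial Hales-Jewett theorem by induction on the polynomial degree $d$. The base case $d=1$ is precisely the ordinary Hales-Jewett theorem for $r$-colorings of $[k]^{n}$, which I would take as given (for instance via Shelah's proof, yielding a primitive recursive bound on $N(k,r)$).

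For the inductive step, assume the result for all polynomial degrees less than $d$ and all finite numbers of colors. Given an $r$-coloring of $K[n]=[k]^{n}\times[k]^{n^{2}}\times\dots\times[k]^{n^{d}}$, I would first process the highest-degree coordinate. For each candidate $\gamma\subset [n]$ and each fixed lower configuration $(a_{1},\dots,a_{d-1})$, varying $x_{d}\in [k]$ traces out a $k$-tuple of colors in $[r]^{k}$, inducing an auxiliary coloring on tuples $(\gamma, a_{1}, \dots, a_{d-1})$ with $r^{k}$ colors. Invoking the ordinary Hales-Jewett theorem on a sufficiently enlarged alphabet should fix the $x_{d}$-behavior along a combinatorial line anchored on a single base set $\gamma_{d}\subset [n]$, reducing the remaining problem to an $r^{k}$-coloring of the lower-degree grid $[k]^{n}\times\dots\times[k]^{n^{d-1}}$. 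The inductive hypothesis then furnishes matching wildcard sets $\gamma_{1},\dots,\gamma_{d-1}$ at the lower levels.

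The central obstacle is that this naive pipeline produces wildcard sets $\gamma_{1},\dots,\gamma_{d}$ that are \emph{a priori} unrelated, whereas the theorem demands a single $\gamma\subset [n]$ with $i$-th wildcard exactly $\gamma^{i}$. To synchronize them, I would replace the straightforward induction by a parameter-set strategy: use the inductive hypothesis to produce a high-dimensional combinatorial subspace at the lower levels (rather than a single line), and then inside that subspace extract a coherent $\gamma$ which simultaneously serves as the wildcard base at the top level, via a second round of HJ on a further enlarged palette. This extraction mimics the Graham-Rothschild parameter set theorem and the IP-system recurrence arguments of Bergelson-Leibman; the bounds compound badly at each induction level, but the qualitative claim survives. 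I expect the hardest step to be designing the auxiliary coloring so that the \emph{same} $\gamma$ is forced across every polynomial degree, since without careful bookkeeping (or an external dynamical input such as an IP-Szemer\'edi type result) the wildcard sets at different degrees stubbornly refuse to align.
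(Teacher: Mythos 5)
The paper does not actually prove this statement; it quotes it as a known theorem of Bergelson--Leibman (in the form given by Walters and McCutcheon), so your attempt can only be judged on its own terms. Judged that way, it has a genuine gap, and it is the one you yourself flag: producing a \emph{single} $\gamma$ whose powers $\gamma, \gamma\times\gamma, \dots, \gamma^{d}$ simultaneously serve as the wildcard sets at every degree is not an alignment nuisance to be patched afterwards by ``a second round of HJ on a further enlarged palette'' --- it is the entire content of the theorem, and no mechanism in your sketch actually produces it. Two concrete problems: first, your auxiliary colouring of tuples $(\gamma, a_{1},\dots,a_{d-1})$ by the $k$-tuple of colours seen as $x_{d}$ varies only controls the \emph{pattern} of colours along the top-degree fibre, not monochromaticity; converting a constant pattern into a constant colour requires a colour-focusing chain as in the standard proofs of van der Waerden and Hales--Jewett, which your pipeline never builds. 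Second, the collection of pairs $(\gamma, a)$ is not itself a combinatorial cube, so ``invoking the ordinary Hales--Jewett theorem on a sufficiently enlarged alphabet'' has no concrete meaning without an explicit encoding, and the natural encodings break the product structure between $\gamma$ and the lower-degree coordinates.

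The known combinatorial proof (Walters) avoids both problems by changing the induction variable: one inducts not on the degree $d$ but on a well-ordered family of \emph{multisets of monomial types}, and the key lemma trades the top monomial $\gamma^{d}$ for finitely many monomials of strictly lower type (cross terms of the form $\gamma^{j}\times F$ for fixed finite sets $F$), at the cost of more colours and a focusing chain built over disjoint blocks of the ground set $[n]$. It is exactly this cross-term bookkeeping that forces the same $\gamma$ to appear at every degree. Bergelson--Leibman achieve the same synchronization by a different device (set-polynomials and IP recurrence). Your outline correctly identifies where the difficulty lives but supplies no substitute for either device, so the decisive step of the proof is missing.
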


As noted by Walters\cite{Walters}, the original Polynomial Van der Waerden theorem follows straightforwardly from this Polynomial Hales Jewett theorem. 

Stronger density versions of these coloring statements have been established over the years. The density version of van der Waerden's theorem\cite{Waerden} is Szemeredi's theorem, of which several celebrated proofs are known. \cite{Szemeredi,Furstenberg2,Gowers2, Gowers3, Gowers4, Nagle, Rodl}.

\begin{theorem}[Szemeredi:]\label{thm:szemeredi}
For every positive integer $k$ and every $\delta>0$ there exists a positive integer $N(k,\delta)$ so that for any $n\geq N(k,\delta)$ and every subset $A\subset [n]$ of size at least $\delta\cdot n$ contains an arithmetic progression of length $k$.
\end{theorem}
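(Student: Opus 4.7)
The plan is to follow Furstenberg's ergodic-theoretic approach, which converts the combinatorial statement into a recurrence property of measure-preserving systems. First I would invoke Furstenberg's correspondence principle: given $A\subset [n]$ of density at least $\delta$, one constructs a measure-preserving system $(X,\mathcal{B},\mu,T)$ together with a set $\tilde{A}\subset X$ with $\mu(\tilde{A})\geq \delta$, so that for any integers $n_1<n_2<\cdots<n_{k-1}$ the upper density of $A\cap (A-n_1)\cap\cdots\cap (A-n_{k-1})$ is bounded below by $\mu(\tilde{A}\cap T^{-n_1}\tilde{A}\cap\cdots\cap T^{-n_{k-1}}\tilde{A})$. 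It therefore suffices to prove Furstenberg's multiple recurrence theorem, namely
\[\liminf_{N\to\infty}\frac{1}{N}\sum_{n=1}^{N}\mu\bigl(\tilde{A}\cap T^{-n}\tilde{A}\cap\cdots\cap T^{-(k-1)n}\tilde{A}\bigr)>0\]
for every measure-preserving system and every $\tilde{A}$ with positive measure, since any failure of Szemer\'edi's statement for some $\delta$ would, by a compactness/diagonal argument across $n\to\infty$, produce a counterexample to the recurrence statement.

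The second step is the structural decomposition of the system. I would build a transfinite tower of factors in which each successor is either a compact (isometric) extension or a weakly mixing extension of its predecessor, and which terminates at the full system (the Furstenberg-Zimmer structure theorem). The base case is the trivial factor, where the multiple average reduces to $\mu(\tilde{A})^{k}$. In the relatively compact case, equicontinuity supplies an abundance of return times $n$ at which $T^{n},T^{2n},\dots,T^{(k-1)n}$ simultaneously lie in a prescribed neighborhood of the identity on a set of positive relative measure, and recurrence is lifted from the base factor. In the relatively weakly mixing case, the multilinear averages asymptotically factor in $L^{2}$, so recurrence for $k$-term patterns reduces to recurrence for $(k-1)$-term patterns over the base, handled by outer induction on $k$.

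The main obstacle is the relatively weakly mixing inductive step: one must set up the correct notion of relative product system, formulate the $(k-1)$-fold multilinear averages over the base factor, and push through a van der Corput style estimate to show the limit factorizes. A second delicate point is the transfinite patching that shows the tower actually exhausts the whole system; this is where compactness of the lattice of factors enters. As an alternative route I would have Gowers' quantitative Fourier-analytic proof in mind, which proceeds by an inverse theorem for the $U^{k-1}$ Gowers norm and a density-increment argument: one shows that any $A$ avoiding $k$-APs has large $U^{k-1}$ norm, so correlates with a polynomial phase of degree $k-2$, on a subprogression of which $A$ has higher density. This yields effective bounds on $N(k,\delta)$ but is substantially more technical and I would only pursue it if effectivity were required.
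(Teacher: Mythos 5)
The paper does not prove this statement at all: Szemer\'edi's theorem appears here only as quoted background, with the proof delegated to the cited literature (Szemer\'edi's original combinatorial argument, Furstenberg's ergodic proof, Gowers' Fourier-analytic proof, and the hypergraph regularity proofs). So there is no in-paper argument to compare yours against; the relevant comparison is with the sources in the bibliography. Your outline is a faithful roadmap of the Furstenberg route: correspondence principle, reduction to multiple recurrence, the Furstenberg--Zimmer tower of compact and weakly mixing extensions, and the two lifting lemmas. That is indeed one of the standard proofs, and you have correctly identified where the real work lies.

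That said, as a proof it is a table of contents rather than an argument. Each load-bearing step --- the correspondence principle, the structure theorem (including the claim that a non-relatively-weakly-mixing extension admits an intermediate nontrivial compact extension), the lifting of the SZ property through compact extensions via relative almost periodicity and a coloring argument, and the van der Corput estimate for weakly mixing extensions --- is itself a theorem requiring several pages; naming them does not discharge them. One substantive imprecision: in the relatively weakly mixing step, the conclusion is not that ``$k$-term recurrence reduces to $(k-1)$-term recurrence over the base''; rather, one shows the multiple averages for the extension converge in $L^{2}$ to the corresponding averages of the conditional expectations on the base, so the SZ property for the \emph{same} $k$ lifts from factor to extension. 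The induction on the number of functions lives inside the van der Corput argument that proves this convergence, not in the recurrence statement itself. If you intend to write this out, you should either commit to the full ergodic machinery or switch to a self-contained combinatorial route; a hybrid sketch at this level of detail would not be accepted as a proof of a theorem of this depth.
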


It is easy to show that the original Hales Jewett theorem\cite{Hales} implies the classical van der Waerden theorem by considering the base $k$ representations of the integers in question. In 1991, Furstenberg and Katznelson proved the density version of Hales Jewett theorem \cite{Furstenberg1} from which Szemeredi's theorem follows as a corollary. Later several other proofs were also found\cite{Polymath, Austin, Dodos}.

We state the Density Hales Jewett theorem here:

\begin{theorem}[Density Hales Jewett:]\label{thm:densityhales}
For every positive integer $k$ and any $0<\delta<1$, there exists a positive integer $DHJ(k,\delta)$ such that for any $n\geq DHJ(k,\delta)$ , a subset $A\subset[k]^{n}$ with density at least $\delta$ contains a combinatorial line of size k.
\end{theorem}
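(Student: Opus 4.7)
The plan is to proceed by induction on the number of letters $k$. The base case $k=1$ is trivial, since any subset of $[1]^n$ is a single point and a combinatorial line of length $1$ is a single point. For $k=2$, the statement reduces to Sperner's theorem: any antichain in the Boolean lattice $\{0,1\}^n$ has size at most $\binom{n}{\lfloor n/2\rfloor} = \Theta(2^n/\sqrt{n}) = o(2^n)$, so a dense $A\subseteq\{0,1\}^n$ must contain a strictly comparable pair $a<b$, and the coordinates where $a_i=0<1=b_i$ form the wildcard set of a combinatorial line of length $2$.

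For the inductive step I assume DHJ for $k-1$ letters and, given $A\subseteq [k]^n$ of density at least $\delta$, aim to establish the following dichotomy: either $A$ already contains a combinatorial line, or there exists a combinatorial subspace $V\subseteq[k]^n$ of dimension $n'\geq f(n,\delta)$ on which $A$ has density at least $\delta+\eta(\delta)$ for some positive $\eta(\delta)$. Iterating this dichotomy at most $O(1/\eta(\delta))$ times contradicts $\delta\leq 1$, so for $n$ large enough the first alternative must occur.

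To set up the dichotomy I would work with the class of \emph{$ij$-insensitive sets}, i.e.\ sets whose indicator function is unchanged when letter $i$ is swapped with letter $j$ at any single coordinate. If $A$ is well approximated by a union of $ij$-insensitive pieces, then identifying $i$ and $j$ collapses $A$ to a dense subset of a copy of $[k-1]^n$, and the induction hypothesis yields a combinatorial line there, which lifts to a combinatorial line in $A$. If no such structured approximation exists, then $A$ correlates nontrivially with some $ij$-insensitive test function, and a standard averaging step converts this correlation into a density increment on a combinatorial subspace of moderately large dimension.

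The main obstacle, and the technical heart of the argument, is making both halves of the dichotomy quantitative while controlling the dimension loss. In particular, one typically has to replace the uniform measure on $[k]^n$ by the equal-slices measure, both to make the lifting step clean and to avoid pathologies near the boundary layers where the letter frequencies are skewed. One must also track carefully how much dimension is consumed by each density increment so that $f(n,\delta)\to\infty$ with $n$. I would expect to follow the \cite{Polymath} template closely for this part rather than attempt a genuine shortcut.
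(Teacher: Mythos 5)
The paper does not actually prove this theorem for general $k$: it is quoted as a known result of Furstenberg and Katznelson \cite{Furstenberg1}, and the only argument supplied in the text is the $k=2$ case via Sperner's theorem, which your base case reproduces essentially verbatim (a line-free set is an antichain, hence has size at most $\binom{n}{\lfloor n/2\rfloor}$, a vanishing fraction of $2^{n}$). So for $k=2$ you and the paper are in exact agreement, and for $k=1$ the statement is indeed vacuous.

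For the inductive step, however, what you have written is a roadmap of the Polymath argument rather than a proof: every load-bearing claim is asserted, not established. Specifically, (i) the statement that a dense line-free set must correlate with an $ij$-insensitive set is itself one of the central lemmas of \cite{Polymath}; it requires the equal-slices measure, a random-subspace averaging argument, and in fact the \emph{multidimensional} version of DHJ for $k-1$ letters, not merely DHJ$(k-1)$; (ii) the passage from correlation with a single insensitive set to a density increment on a combinatorial subspace whose dimension still tends to infinity is a second substantial lemma (an approximation/partition argument for intersections of insensitive sets), and this is exactly where the dimension bookkeeping you mention becomes delicate; (iii) the claim that identifying letters $i$ and $j$ collapses an insensitive set to a copy of $[k-1]^{n}$ in which a combinatorial line lifts back to a full $k$-letter line in $A$ needs justification, since a line over $k-1$ letters does not automatically extend to one over $k$ letters --- this is precisely the point at which the insensitivity structure is consumed. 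You concede all of this by saying you would follow the \cite{Polymath} template for the technical heart, which is honest, but it means the proposal is an accurate table of contents for a known proof rather than a proof. Since the paper itself also declines to prove the theorem and simply cites the literature, the practical gap between you and the paper is small; judged as a standalone argument, though, each of the three points above is a genuine missing step.
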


The $k=2$ version of the density Hales Jewett theorem follows from Sperner's theorem; where we interpret a string of $0'$s and $1'$s of length $n$ to be a subset of $\{1,2,...,n\}$, and a combinatorial line is interpreted as two subsets $A,B$ such that one of them, say $A$ is completely contained inside the other. Thus to not contain a combinatorial line, we are looking at a maximal antichain which by Sperner's theorem has size at most ${{n}\choose {\lfloor \frac{n}{2} \rfloor}}$, and by Stirling's approximation, $\frac{{{n}\choose {\lfloor \frac{n}{2} \rfloor}}}{2^{n}}\sim \frac{2}{\sqrt{n}} \to 0$ as $n\to \infty$. Thus for large enough $n$, we are forced to contain a combinatorial line.

Later, Bergelson established the polynomial van der Waerden theorem and polynomial Szemeredi theorem in the same paper\cite{Bergelson2}, for polynomials with integer coefficients and without constant terms. Such polynomials are refered to as integral polynomials. We state the theorems below:

\begin{theorem}[Polynomial van der Waerden:]\label{thm:polyvander}
Suppose that $p_1,p_2,..,p_m$ are $m$ integral polynomials. Then there exists a positive integer $N(m)$ such that for all $n\geq N(m)$, there exists $a$ and $0<d\leq n$ such that the set $\{a\} \cup \{a+p_i(d):1\leq i\leq m\}$ is monochromatic, whenever $\N$ is finite colored.

\end{theorem}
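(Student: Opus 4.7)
The plan is to derive this theorem from the Polynomial Hales-Jewett theorem (\cref{thm:polyhales}) via a pullback construction, in direct analogy with the classical derivation of van der Waerden's theorem from Hales-Jewett. Let $D = \max_i \deg p_i$ and write each polynomial as $p_i(t) = \sum_{j=1}^{D} c_{ij} t^j$ (with no constant term by the integral polynomial hypothesis). Set $C = \max_{i,j} |c_{ij}|$, choose alphabet size $k = 2C+1$, designate the ``center'' value $x_j^{(0)} = C+1 \in [k]$, and take $n \ge N(k,r)$ where $N(k,r)$ is the bound produced by \cref{thm:polyhales}.

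Given an $r$-coloring $\chi : \Z \to [r]$, define the projection
$$\phi(a^{(1)}, \ldots, a^{(D)}) \;=\; \sum_{j=1}^{D} \sum_{I \in [n]^j} a^{(j)}_I, \qquad a^{(j)} \in [k]^{n^j},$$
and pull back to obtain the coloring $\chi \circ \phi$ on $K[n]$. The key identity is that for any wildcard set $\gamma \subset [n]$ with $|\gamma|=t$ and any $(x_1,\ldots,x_D) \in [k]^D$,
$$\phi\bigl(a \oplus x_1\gamma \oplus x_2 \gamma^2 \oplus \cdots \oplus x_D\gamma^D\bigr) \;=\; C_0(a,\gamma) \;+\; \sum_{j=1}^{D} t^j x_j,$$
since the $j$-th layer contributes $|\gamma^j|\cdot x_j = t^j x_j$ on the wildcarded coordinates, while the untouched contribution $C_0(a,\gamma)$ does not depend on the $x_j$.

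Apply \cref{thm:polyhales} to $\chi \circ \phi$ to produce $a \in K[n]$ and a wildcard set $\gamma \subset [n]$ of size $t > 0$ such that the full $D$-dimensional combinatorial subspace is monochromatic. Let $b$ be the $\phi$-value obtained from $x_j = x_j^{(0)}$ for all $j$. For each $i = 1,\ldots,m$, the tuple $(x_1^{(0)}+c_{i1},\ldots,x_D^{(0)}+c_{iD})$ lies in $[k]^D$ because $|c_{ij}| \le C$, and substituting into the identity above yields $\phi$-value $b + p_i(t)$. Hence $\{b,\,b+p_1(t),\,\ldots,\,b+p_m(t)\}$ is $\chi$-monochromatic with $0 < t \le n$, which is exactly the conclusion.

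The main obstacle is engineering the map $\phi$ so that each layer's contribution factors cleanly as $t^j x_j$ with no cross-terms nor leftover dependence on $\gamma$; the uniform coefficient choice above accomplishes this via the trivial identity $|\gamma^j| = |\gamma|^j$. A secondary subtlety is that all required wildcard tuples must lie in $[k]^D$, which forces the symmetric choice $k = 2C+1$ centered at $x_j^{(0)}$. The integral (no-constant-term) hypothesis is used essentially here: a constant term in $p_i$ would survive as an $i$-dependent additive shift that our encoding cannot absorb into the baseline $b$.
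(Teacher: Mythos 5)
Your derivation is correct: pulling back the coloring along the coordinate-sum map $\phi$, the identity $\phi\bigl(a\oplus x_1\gamma\oplus\cdots\oplus x_D\gamma^D\bigr)=C_0(a,\gamma)+\sum_{j=1}^{D}|\gamma|^{j}x_j$, and the centered alphabet $k=2C+1$ together convert a monochromatic combinatorial subspace supplied by the Polynomial Hales--Jewett theorem into the monochromatic pattern $\{b\}\cup\{b+p_i(|\gamma|):1\leq i\leq m\}$ with $0<|\gamma|\leq n$, and your use of the no-constant-term hypothesis is exactly where it is needed. This is precisely the route the paper indicates but does not carry out --- it only cites Bergelson--Leibman and remarks, following Walters, that the theorem ``follows straightforwardly'' from Polynomial Hales--Jewett --- so your argument supplies the intended details along the same approach.
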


\begin{theorem}[Polynomial Szemeredi:]\label{thm:polyszemeredi}

Suppose that $p_1,p_2,..,p_m$ are $m$ integral polynomials and $0<\delta<1$ any small real number. Then there exists a positive integer $N(m, \delta)$ such that for all $n\geq N(m,\delta)$, and any set $A\subset [n]$ of size at least $\delta\cdot n$, there exists $a$ and $0<d\leq n$ such that the set $\{a\} \cup \{a+p_i(d):1\leq i\leq m\}$ is monochromatic.

\end{theorem}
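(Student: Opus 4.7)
The plan is to reduce the combinatorial statement to a polynomial multiple recurrence theorem in ergodic theory via the Furstenberg correspondence principle. Given $A \subset [n]$ of density at least $\delta$, a standard diagonalization produces a measure preserving system $(X,\mathcal{B},\mu,T)$ and a measurable set $\tilde{A} \subset X$ with $\mu(\tilde{A}) \geq \delta$, such that whenever $\mu\bigl(\tilde{A} \cap T^{-p_1(d)}\tilde{A} \cap \cdots \cap T^{-p_m(d)}\tilde{A}\bigr) > 0$ for some $d > 0$, the configuration $\{a,\, a+p_1(d),\,\dots,\,a+p_m(d)\}$ lies in $A$ for some $a$. It therefore suffices to establish, for every measure preserving system and every $B$ of positive measure,
\[
\liminf_{N \to \infty} \frac{1}{N} \sum_{d=1}^{N} \mu\bigl(B \cap T^{-p_1(d)}B \cap \cdots \cap T^{-p_m(d)}B\bigr) > 0.
\]

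The technical engine of the proof is Bergelson's PET (polynomial exhaustion technique) induction. One assigns to any tuple $(p_1,\ldots,p_m)$ of integral polynomials a well-founded weight encoding degrees and leading coefficients, and applies van der Corput's lemma to the polynomial multiple ergodic average $\frac{1}{N}\sum_d\, T^{p_1(d)}f_1\cdots T^{p_m(d)}f_m$: after selecting a distinguished $p_i$ and introducing a shift $h$, one obtains a double average whose polynomial exponents are $p_j(d+h) - p_i(d)$, and a careful choice of the distinguished polynomial guarantees the new tuple has strictly smaller PET weight. Iterating drives the induction to a base case of effectively linear polynomial exponents, handled by Furstenberg's multiple recurrence theorem (the ergodic-theoretic counterpart of \cref{thm:szemeredi}).

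The hard part will be to identify and exploit characteristic factors at each inductive step: one must show that, up to an error vanishing in $L^2$, the multiple average is unchanged when each $f_i$ is replaced by its projection onto a suitable factor, and that this factor is structured enough to support a positivity argument. The Host--Kra--Ziegler structure theorem identifies the characteristic factors for polynomial averages as inverse limits of nilsystems, on which polynomial orbits are equidistributed by Leibman's theorem; recurrence on such systems then reduces to a statement about density in a nilmanifold, for which Weyl equidistribution suffices. An alternative route, closer to Bergelson--Leibman's original approach in \cite{Bergelson2}, sidesteps the full nilpotent structure theorem and constructs characteristic factors concretely inside the PET induction, which is less conceptually clean but keeps the argument self-contained. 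Either way, the two indispensable ingredients are PET induction with van der Corput and a structure theorem producing nilpotent characteristic factors on which a direct positivity/density argument can be made.
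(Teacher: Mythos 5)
The paper does not actually prove this statement: it is quoted as a known theorem of Bergelson and Leibman \cite{Bergelson2}, so there is no internal proof to compare against. (Incidentally, the paper's phrasing ``is monochromatic'' is a slip carried over from the van der Waerden version; for the density statement it should read ``is contained in $A$,'' which is how you have correctly interpreted it.)

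Judged on its own, your outline names the right ingredients of the standard ergodic-theoretic proof --- Furstenberg correspondence, PET induction with van der Corput, characteristic factors --- but it is a roadmap rather than a proof, and the two places where it is most confident are the two places where it is least accurate. First, the PET induction for \emph{recurrence} (as opposed to norm convergence of the averages) does not bottom out in Furstenberg's multiple recurrence theorem: in Bergelson and Leibman's argument the base of the induction is the Furstenberg--Katznelson IP-Szemer\'edi machinery, and this is where most of the real work lives; asserting that the base case is ``handled by'' the ergodic counterpart of \cref{thm:szemeredi} hides the essential difficulty. Second, ``Weyl equidistribution suffices'' for positivity on the nilfactor is too glib: one needs Leibman's equidistribution theorem for polynomial orbits on nilmanifolds \emph{together with} the hypothesis that the $p_i$ are integral polynomials with zero constant term, which forces the orbit closure of $\bigl(T^{p_1(d)}x,\dots,T^{p_m(d)}x\bigr)$ to contain the diagonal point at $d=0$ and is exactly what makes the positivity argument go through; without that hypothesis the theorem is false, and your sketch never uses it. Finally, the finitary statement with the constraint $0<d\leq n$ requires the standard finitization of the correspondence principle (a compactness or counting argument over all systems), which you should at least flag. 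None of these is fatal --- the strategy is the correct one --- but as written the proposal defers every genuinely hard step to an unproved black box.
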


There is a natural polynomial density Hales Jewett conjecture that generalizes \cref{thm:polyszemeredi}, which we state below. It would be a generalization of both the Polynomial density Hales Jewett theorem as well as the Density Hales Jewett theorem. For $k=2$, and only considering the quadratic term, we stated the corresponding version in \cref{conj:conj1}.

\begin{conjecture}[Polynomial Density Hales Jewett]\label{conj:PDHJ}

For positive integers $k$ and a small $0<\delta<1$, there exists a positive integer $N(k,\delta)$ such that for all $n\geq N(k,\delta)$ whenever we consider $A\subset K[n]=[k]^{n} \times [k]^{n^{2}}\times \dots \times [k]^{n^{d}}$ with cardinality at least $\delta\cdot k^{n+n^{2}+\dots+n^{d}}$, there exists $a\in K[n]$ and $\gamma\subset [N]$ such that the set of points $\{a\oplus x_1\gamma \oplus x_2 (\gamma \times \gamma)\oplus \dots \oplus x_{d} \gamma^{d}:1\leq x_{i} \leq k, 1\leq i\leq d \}$ is contained in $A$.
\end{conjecture}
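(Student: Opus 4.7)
The plan is to follow the blueprint established by Furstenberg and Katznelson for the Density Hales Jewett theorem (\cref{thm:densityhales}), combined with the polynomial recurrence techniques of Bergelson and Leibman underlying \cref{thm:polyszemeredi}. The first step would be a correspondence principle: from a sequence of dense subsets $A_n \subset K[n] = [k]^n \times [k]^{n^2} \times \cdots \times [k]^{n^d}$ of density at least $\delta$, one builds an ultralimit probability measure-preserving system on which the coordinate ``wildcard insertion'' operators $T^{\gamma}$ (associated to any nonempty $\gamma \subset [n]$) act, together with a distinguished measurable set $E$ of measure at least $\delta$. Producing the combinatorial subspace inside $A$ that the conjecture asks for should then translate into showing that for some nonempty $\gamma$,
\[
\int \mathbf{1}_E \cdot T_1^{\gamma}\mathbf{1}_E \cdot T_2^{\gamma \times \gamma}\mathbf{1}_E \cdots T_d^{\gamma^d}\mathbf{1}_E \, d\mu > 0.
\]

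Second, I would attempt to prove this multiple recurrence statement by a PET-style induction in the spirit of Bergelson--Leibman. Via repeated van der Corput applications, one removes the highest-degree term $T_d^{\gamma^d}$, reducing the correlation to one indexed only by strictly lower-degree wildcard powers, averaged against shifts of $E$. Iterating this collapses the analysis to a linear Density Hales Jewett correlation, which can be handled by Polymath-style hypergraph methods. To make each reduction step rigorous, one has to identify, for each intermediate correlation, a characteristic factor of nilpotent type and then invoke an equidistribution theorem for polynomial wildcard orbits on such factors to establish positivity.

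The hard part — and the reason the conjecture remains open even in the base case $d=k=2$ stated as \cref{conj:conj1} — is that the family of operators $T^{\gamma^j}$ is indexed by subsets of $[n]$ and their Cartesian powers, rather than by an abelian group, so the ambient structure is not a $\Z^k$-action. Their commutation relations are genuinely combinatorial, and a direct analogue of Host--Kra or Ziegler characteristic factors is not available off the shelf. In particular, one would need an inverse theorem for a generalized Gowers-type norm sensitive to wildcards of Cartesian-product shape, together with an equidistribution theory for polynomial orbits that respects this product structure. It is precisely this obstruction that forces the present paper to settle for only the difference-set (two-element) version of \cref{conj:conj1}, which can instead be attacked by more elementary Cauchy--Schwarz and probabilistic arguments that avoid the full characteristic-factor machinery.
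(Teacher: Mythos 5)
This statement is stated in the paper as \cref{conj:PDHJ}, an open \emph{conjecture}; the paper offers no proof of it (and indeed proves only much weaker partial results, namely \cref{thm:mainthm1,thm:mainthm2} and the conditional reduction in \cref{thm:mdqhj}). So there is no paper proof to compare against, and your proposal is not a proof either: it is a research programme in which every load-bearing step is left unestablished. Concretely: (i) the correspondence principle you invoke is not known to produce a measure-preserving action here, because the wildcard-insertion operators $T^{\gamma}$ are indexed by subsets of $[n]$ and are not even injective on the ambient space, so the standard Furstenberg ultralimit construction does not directly yield a system on which a multiple-recurrence statement makes sense; (ii) the PET/van der Corput induction you describe requires the operators to generate something like a nilpotent or at least polynomial group action, and as you yourself note, no such structure (and no Host--Kra-type characteristic factor or inverse theorem for a wildcard-adapted Gowers norm) is available; (iii) even the base of your induction --- positivity of the ``linear'' correlation --- is the density Hales--Jewett theorem in a form that would need to be compatible with the polynomial reduction, which is not off-the-shelf. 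You candidly flag all of this in your final paragraph, which is to your credit, but it means the proposal establishes nothing beyond what is already known: the statement remains a conjecture, and your sketch correctly diagnoses \emph{why} it is hard without closing any of the gaps.

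If your intent was to summarize the state of the art and the obstructions, that summary is accurate and consistent with the paper's framing (the paper likewise treats \cref{conj:PDHJ} as open and only proves that it would imply the multidimensional version, \cref{thm:mdqhj}). But it should not be presented as a proof of the statement.
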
 


Here we briefly outline how the conjectured Quadratic Density Hales Jewett theorem implies Sarkozy's theorem\cite{Sarkozy} which is a special case of the Polynomial Szemeredi theorem outlined above. By a similar argument to the one outlined below, one can also show that the Polynomial Density Hales Jewett theorem implies the Polynomial Szemeredi theorem. 

We consider the finitary statement of Sarkozy's theorem; that given any $0<\delta<1$, there would exist some integer $N(\delta)>0$ so that for all $n\geq N(\delta)$, any subset of $[n]$ with at least $\delta \cdot n$ many elements contains two numbers that differ by a non zero integer square. For the given $\delta$, consider the square of length $m_n=\lfloor \sqrt{2n^{2}+n} \rfloor$ and all the integers $n^{2}+i$ with $i\in A\subset [n]$ where $|A|\geq \delta\cdot n$. We can consider for each $i$, all possible sequences of 1's and 0's inside the $m_n \times m_n$ square grid so that the sum of the 1's is exactly $n^2+i$. It's apparent that for large enough $n$ we have a dense subset of $\{0,1\}^{m_n^2}$, and by \cref{conj:conj1} we would have two numbers of the form $n^2+i, n^{2}+i+|\gamma|^2$, where $\gamma$ is the size of the wildcard set, and where $i, i+|\gamma|^{2}$ both belong to $A$, which would establish Sarkozy's theorem.

In this paper, we show one can construct dense subsets of $\{0,1\}^{n^{2}}$ that have certain restrictions on the shape of the wildcard sets for the combinatorial line. It is easy to see, as should be clear from the discussion in the next section, that when we consider the set of all elements of $\{0,1\}^{n^{2}}$ with an even number of $1$'s, then upon symmetric differences, we also have a set with an even number of $1$s. This is a vector subspace of index 2 inside $\mathbb{F}_{2}^{n^{2}}$. The other coset is the set of elements with an odd number of $1$'s. One can thus consider either of these cosets which are dense subsets and the difference sets will always have an even nummber of elements. One can thus rule out wildcard sets of odd size in this case. Since the coloring Polynomial Hales Jewett theorem is true, the two cosets above can be considered to be a partition of $\{0,1\}^{n^2}$ into two colors, and at least one of them would have a combinatorial line and thus also a difference set of the form $\gamma \times \gamma$, and we would be forced to have $|\gamma|$ even. Because of the coset structure for subspaces in $\mathbb{F}_{2}^{n^{2}}$, once we find two elements whose difference set is of the square form $\gamma\times \gamma$, we also find the same in the other coset. This argument also extends in the obvious way for subspaces of any index $n\geq 3$.

Here we state the following theorem, the first case of which is already explained above, and the rest are proved in Section 3.  

\begin{theorem}\label{thm:mainthm1}

\begin{enumerate}
    \item[(i)] For large enough $n$, there exist subsets of $\{0,1\}^{n^2}$ containing exactly $2^{n^{2}-1}$ elements, thus of density $\delta=1/2$, so that the difference set of any two elements of this subset that constitute a combinatorial line, is of the form $\gamma \times \gamma$ with $|\gamma|$ is even.
    
    \item[(ii)] Given a positive integer $k$ and large enough $n$, and a set of elements $S=\{a_1,\dots,a_k\} \subset \{1,\dots,n\}$, there exist subspaces of $\{0,1\}^{n^2}$ of density $\delta=\frac{1}{2^{k}}$ so that the difference set of any two elements of this subspace that constitute a combinatorial line, does not contain any coordinate belonging to $S$.

    \item[(iii)] Given a positive integer $k$ and large enough even integer $n$, and a set of elements $S=\{a_1,\dots,a_k\} \subset \{1,\dots,n\}$, there exist subspaces of $\{0,1\}^{n^2}$ of density $\delta=\frac{1}{2^{\lfloor \frac{k}{2}\rfloor}}$ so that the difference set of any two elements of this subspace that constitute a combinatorial line, contains at least one pair of two consecutive elements of $S$.

    \item[(iv)] For any large enough $n\geq 4$, there exist subspaces of density $\delta=\frac{1}{4}$ so that the difference set corresponding to the combinatorial line contained in this subspace is of the form $\gamma \times \gamma$ with $\gamma\subset \{1,\dots,n\}$ so that $4 ||\gamma|$.
    
    \item[(v)] For any large enough $n\geq 4$, there exist subspaces of density $\delta=\frac{1}{2}$ so that the difference set corresponding to the combinatorial line contained in this subspace is of the form $\gamma \times \gamma$ with $\gamma\subset \{1,\dots,n\}$ so that $|\gamma|\equiv 0,-1 (\text{mod} \ 4)$.
    
    \item[(vi)] For any large enough $n\geq 4$, there exist subspaces of density $\delta=\frac{1}{2}$ so that the difference set corresponding to the combinatorial line contained in this subspace is of the form $\gamma \times \gamma$ with $\gamma\subset \{1,\dots,n\}$ and $\gamma$ is symmetric in the sense that whenever $j\in \gamma$, we have $(n+1-j) \in \gamma$, and further when $n$ is even, we have $4||\gamma|$.
    
    \item[(vii)] For each positive integer $t$, we have subspaces of density $\delta=\frac{1}{2}$, in $\{0,1\}^{n^{2}}$ for $n$ large enough, so that the difference set corresponding to the combinatorial line contained in this subspace is of the form $\gamma\times \gamma$ and if the initial and last elements of $\gamma$ are $a \ \text{and} \  b$ when arranged in increasing order, then $a+t, b-t\in \gamma$ as well.
    
\end{enumerate}
\end{theorem}    

\bigskip
The main theorem we prove in Section 4 is stated below:

\begin{theorem}\label{thm:mainthm2}
For any $\delta>0$, there exists $N(\delta)= \lceil \log_2(\frac{1}{\delta}+1)  \rceil$ such that for all $n\geq N(\delta)$, for any nonempty $\gamma_1\subset \{1,2,\dots,n\}$ we have for any subset $S$ of $\{0,1\}^{n^{2}}$ with $|S|\geq \delta\cdot 2^{n^{2}}$ , two elements $s_{1},s_{2}\in S$ such that their difference set is of the form $\gamma_{1} \times \gamma_{2}$ where $ \gamma_2\subset\{1,2,\dots,n\}$ is a non empty subset.   
\end{theorem}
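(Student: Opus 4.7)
The plan is a single clean fibering-plus-pigeonhole argument. Fix the given nonempty $\gamma_1 \subset [n]$, write $r = |\gamma_1|$, and view any $s \in \{0,1\}^{n^2}$ as a pair $(\alpha,\beta)$ where $\alpha \in \{0,1\}^{([n]\setminus \gamma_1) \times [n]}$ records the entries in rows outside $\gamma_1$ and $\beta \in \{0,1\}^{\gamma_1 \times [n]}$ records the entries in rows inside $\gamma_1$. For each $\alpha$, let $S_\alpha = \{\beta : (\alpha, \beta) \in S\}$; then $\sum_{\alpha} |S_\alpha| = |S| \geq \delta \cdot 2^{n^2}$ with the sum having $2^{(n-r)n}$ terms.

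The next step is to identify when two elements $(\alpha, \beta_1), (\alpha, \beta_2) \in S$ sitting in a common fiber yield a difference set of the desired form. Their difference is automatically contained in $\gamma_1 \times [n]$, and equals $\gamma_1 \times \gamma_2$ for some nonempty $\gamma_2$ precisely when, on each column $j$, the restrictions $\beta_1|_j, \beta_2|_j \in \{0,1\}^{\gamma_1}$ are either equal or complementary, with at least one column of the complementary type. This is exactly the orbit relation for the free action of $\mathbb{F}_2^n$ on $(\{0,1\}^{\gamma_1})^n$ defined by $(x_1,\dots,x_n)\cdot(v_1,\dots,v_n) = (v_1 \oplus x_1\mathbf{1}_{\gamma_1},\dots,v_n \oplus x_n\mathbf{1}_{\gamma_1})$; freeness uses $r \geq 1$ so that $\mathbf{1}_{\gamma_1} \neq 0$. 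Hence $\{0,1\}^{\gamma_1 \times [n]}$ partitions into orbits of size exactly $2^n$, totalling $2^{rn}/2^n = 2^{(r-1)n}$ orbits.

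To finish, I would suppose toward contradiction that every $\alpha$ satisfies $|S_\alpha| \leq 2^{(r-1)n}$. Then $|S| \leq 2^{(n-r)n} \cdot 2^{(r-1)n} = 2^{n^2 - n}$, which is strictly smaller than $\delta \cdot 2^{n^2}$ as soon as $n > \log_2(1/\delta)$. So for any $n \geq N(\delta) := \lceil \log_2(1/\delta) \rceil + 1$ there must exist $\alpha$ with $|S_\alpha|$ strictly exceeding the number of orbits, and pigeonhole produces $\beta_1 \neq \beta_2$ in $S_\alpha$ lying in a common orbit. Taking $\gamma_2$ to be the set of columns where they are complementary (nonempty since $\beta_1 \neq \beta_2$), the pair $(\alpha,\beta_1),(\alpha,\beta_2) \in S$ has difference set precisely $\gamma_1 \times \gamma_2$. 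Note that $N(\delta)$ here is independent of $\gamma_1$, as the theorem demands.

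There is no real obstacle; the only mild subtlety is recognizing that column-wise complementation on $\gamma_1$ is the correct equivalence and verifying that orbits have the full size $2^n$, which is exactly why the argument does not degenerate in the extreme case $r = n$ (where there is a single fiber and the required bound $|S| > 2^{n^2 - n}$ comes out the same). The bound $N(\delta) = \lceil \log_2(1/\delta) \rceil + 1$ is essentially best possible from this method, since the counting saturates when $|S_\alpha|$ equals the number of orbits for every $\alpha$.
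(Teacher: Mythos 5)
Your proposal is correct: the fibering over the rows outside $\gamma_1$, the identification of the "equal or complementary in each column" relation as the orbit relation of a free $\mathbb{F}_2^n$-action, and the final counting all check out, and the degenerate case $r=n$ is handled. It is, however, a genuinely different execution from the paper's. The paper fixes $\gamma=\gamma_1$ and uses only the $n$ single-column rectangles $\gamma\times\{i\}$: it forms all $n\,|S|$ sums $\gamma\times\{i\}+s_t$ in $\{0,1\}^{n^2}$ and pigeonholes; either some sum lands back in $S$ (difference set $\gamma\times\{i\}$) or two sums collide (difference set $\gamma\times\{i,j\}$), which requires $n\geq\lceil 1/\delta\rceil$ and always produces $|\gamma_2|\leq 2$. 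Your argument in effect replaces these $n$ translates by the full subgroup $H=\{\gamma_1\times A: A\subseteq [n]\}\leq\mathbb{F}_2^{n^2}$ of order $2^n$ --- your orbits within a fiber are exactly the cosets of $H$ --- and pigeonholes $S$ against the $2^{n^2-n}$ cosets. What this buys is an exponentially better threshold, $N(\delta)\approx\log_2(1/\delta)$ versus the paper's $\approx 1/\delta$, together with a matching extremal example (a transversal of the cosets of $H$ has density $2^{-n}$ and contains no pair with difference set $\gamma_1\times\gamma_2$), at the cost of losing control on $|\gamma_2|$; the paper's weaker count has the minor virtue of pinning the rectangle down to one or two columns.
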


In order to get a combinatorial line of the rectangular form, it will be enough to find two elements in $S$ such that their symmetric difference is of the rectangular form $\gamma_1 \times \gamma_2$, and the set of $1$'s in either of these two elements is a superset of this set $\gamma_1 \times \gamma_2$. This will likely require a large count of the number of pairs $(s_i,s_j)$ to force one pair to satisfy this property. This will be more involved than our proof where we obtain just one pair of elements with difference set of the rectangular form.

Following Furstenberg, Katznelson's original argument that showed that the multidimensional density Hales Jewett theorem follows from the density Hales Jewett theorem (Proposition 1.7 in \cite{Polymath}), we show that the natural conjectured multidimensional analog of the polynomial density Hales Jewett theorem follows from the conjectured quadratic density Hales Jewett theorem. 

\begin{theorem}\label{thm:mdqhj}($MQDHJ_{k}$) Let $\epsilon >0, k\in \mathbb{N}$. There exists $N=N(\epsilon,k)$ having the property that if $E\subset \{1,..,k\}^{N^{2}}$ with $|E|\geq \epsilon k^{N^{2}}$ then there exists a $d$ dimensional combinatorial subspace contained in $E$, of cardinality $k^{d}$, where each of the $d$ wildcard sets is of the form $\alpha_i \times \alpha_i$ where for $1\leq i\leq d$ , each of the $\alpha_i \subset \{ 1,2,..,N\}$ are distinct. 
\end{theorem}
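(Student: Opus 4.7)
The plan is to mimic the Furstenberg--Katznelson derivation of the multidimensional density Hales--Jewett theorem from its one-dimensional version, with QDHJ replacing DHJ and a block identification arranged so that the $d$ wildcards come out in the required square form $\alpha_i\times\alpha_i$. The idea is to apply QDHJ a single time, but with the enlarged alphabet $[k^{d^{2}}]$ to $E$ viewed under a $d\times d$ block coding; this produces a $d^{2}$-dimensional combinatorial subspace inside $E$, and I then restrict to its $d$-dimensional diagonal sub-subspace.

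Step 1 (block reinterpretation). Pick $N=Md$ and identify $[k]^{N^{2}}\cong[k^{d^{2}}]^{M^{2}}$ by assigning to each $(i,j)\in [M]^{2}$ the $d\times d$ block $\{((i-1)d+l,(j-1)d+m):l,m\in[d]\}\subset [N]^{2}$, with a letter $x\in [k^{d^{2}}]$ read as the $d\times d$ matrix $(x_{l,m})_{l,m\in[d]}\in [k]^{d\times d}$ filling that block. Since $(k^{d^{2}})^{M^{2}}=k^{N^{2}}$, this is a cardinality-preserving bijection, and the image $E'\subset [k^{d^{2}}]^{M^{2}}$ of $E$ still has density $\epsilon$. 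Choose $N$ so that $M\ge QDHJ(\epsilon,k^{d^{2}})$, the threshold of the quadratic density Hales--Jewett conjecture at the enlarged alphabet $[k^{d^{2}}]$; I take the $k$-letter version of \cref{conj:conj1} as given, in parallel with the standard passage from 2-letter to general-alphabet density Hales--Jewett.

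Step 2 (apply QDHJ, unpack, diagonalize). QDHJ produces $a'\in E'$ and a nonempty $\gamma\subset [M]$ with $\{a'\oplus x(\gamma\times\gamma):x\in [k^{d^{2}}]\}\subset E'$. For each $l\in[d]$ set $\alpha_l=\{(i-1)d+l:i\in\gamma\}\subset [N]$; these are nonempty, pairwise disjoint (hence distinct) subsets each of size $|\gamma|$, and the rectangles $\alpha_l\times\alpha_m$ for $(l,m)\in [d]^{2}$ are pairwise disjoint subsets of $[N]^{2}$. Under the block identification the QDHJ line becomes the $d^{2}$-dimensional combinatorial subspace
\[
\Bigl\{a\oplus\sum_{l,m\in[d]}x_{l,m}(\alpha_l\times\alpha_m):(x_{l,m})\in [k]^{d^{2}}\Bigr\}\subset E,
\]
where $a\in [k]^{N^{2}}$ is the preimage of $a'$. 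Restrict the parameter to the $d$-dimensional affine slice $\{x_{l,l}=y_l,\ x_{l,m}=0\text{ for }l\ne m\}$ and absorb the fixed off-diagonal zeros into a new base $b\in [k]^{N^{2}}$ (obtained from $a$ by overwriting the coordinates in $\bigcup_{l\ne m}\alpha_l\times\alpha_m$ with $0$). This yields
\[
\Bigl\{b\oplus\sum_{l\in[d]}y_l(\alpha_l\times\alpha_l):(y_1,\dots,y_d)\in [k]^{d}\Bigr\}\subset E,
\]
the required $d$-dimensional combinatorial subspace with wildcards of square form $\alpha_l\times\alpha_l$ and the $\alpha_l$ distinct.

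The only non-routine step is the alphabet-size upgrade in Step 1, i.e., taking QDHJ at the enlarged alphabet $[k^{d^{2}}]$ as input. The decisive structural feature---the one producing square wildcards $\alpha_l\times\alpha_l$ rather than the arbitrary rectangles a naive tensor-product argument would yield---is that the block coding uses the same $d$-part partition of $[N]$ on both axes of $[N]^{2}$, so that the one-dimensional wildcard $\gamma\times\gamma\subset [M]^{2}$ pulls back to the symmetric union $\bigcup_{l,m\in[d]}\alpha_l\times\alpha_m$ in $[N]^{2}$; the diagonal restriction $l=m$ then singles out the $d$ square pieces from this symmetric family.
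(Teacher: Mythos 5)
Your argument is correct, but it takes a genuinely different route from the paper's. The paper follows the Furstenberg--Katznelson scheme literally: it inducts on $d$, splits the grid into an upper-left $m\times m$ block and a complementary lower-right block, applies the inductive hypothesis to the fibres $E_{z_l}$ over the ``good'' lower strings, pigeonholes over the finitely many $(d-1)$-dimensional subspaces $\sigma$ of the upper block to find one whose set $G_{\sigma_0}$ is still dense, and then applies the conjectural one-dimensional $QDHJ_{k}$ at the degraded density $\epsilon/(2T(m))$ to append the $d$-th square wildcard. You instead make a \emph{single} application of QDHJ after a $d\times d$ block recoding of $[N]^{2}\cong([M]\times[d])^{2}$ into $[k^{d^{2}}]^{M^{2}}$, unpack the wildcard $\gamma\times\gamma\subset[M]^{2}$ into the symmetric family $\{\alpha_l\times\alpha_m\}$, and pass to the diagonal slice; this avoids both the induction and the pigeonhole over subspaces, and it correctly produces disjoint (hence distinct) $\alpha_l$ with square wildcards, so the construction is sound. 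What each approach buys: the paper's induction only ever invokes QDHJ at the original alphabet size $k$, paying instead in density ($\epsilon/(2T(m))$, iterated $d$ times); your argument keeps the density at $\epsilon$ but invokes QDHJ at the enlarged alphabet $[k^{d^{2}}]$. Since QDHJ is an open conjecture and the theorem is precisely a reduction to it, this difference is not cosmetic: there is no trivial passage from $QDHJ_{k}$ to $QDHJ_{k'}$ for $k'>k$ (a copy of $[k]^{M^{2}}$ inside $[k']^{M^{2}}$ has vanishing relative density), so your Step 1 is a strictly stronger per-instance hypothesis than the paper's. Both proofs establish the global implication ``QDHJ for all alphabet sizes implies MQDHJ for all alphabet sizes,'' but only the paper's version gives the fixed-alphabet reduction $QDHJ_{k}\Rightarrow MQDHJ_{k}$. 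You flag the alphabet upgrade yourself as the only non-routine step; it should be stated as an explicit additional hypothesis rather than by analogy with the (itself non-trivial) alphabet induction in ordinary DHJ.
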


\begin{proof}[Proof of Theorem \ref{thm:mdqhj}]
The induction is on the dimension $d$. Suppose we have proven $MQDHJ_{k}$ for $d-1$ and let $A\subset [k]^{N^{2}} $ have density $\epsilon>0$. Let $m=MQDHJ_{k}(\epsilon/2, d-1)$. For every string $z\in [k]^{N^{2}}$, consider the associated pair $z'=(z_{u},z_{l})$ where $z_{u}$ is the restriction of $z$ to the upper left $m^{2}$ square coordinates, and $z_{l}$ is the restriction to the lower $(N-m)^{2}$ square coordinates. Note that in going from $z$ to $z'$ we are neglecting the coordinates that do not belong to the union of the upper left $m^2$ square coordinates and the lower right $(N-m)^{2}$ many coordinates. 

Call a string $z_{l}$ in $[k]^{(N-m)^{2}}$ good, if the set $E_{z_{l}}= \{ z_{u} \in [k]^{m^2}: (z_{u},z_{l})\in E \}$ has density at least $\epsilon/2$ within $[k]^{m^{2}}$. It is not hard to see that the set of good strings has to have density at least $\epsilon/2$ within $[k]^{(N-m)^{2}}$. If not, we can bound from above the total number of elements in $E$ by $(\epsilon/2)\cdot k^{m^{2}}\cdot k^{(N-m)^{2}} + (\epsilon/2)\cdot k^{(N-m)^{2}}\cdot k^{m^{2}} =\epsilon \cdot k^{m^{2}+(N-m)^{2}}\leq \epsilon\cdot k^{N^{2}} $ which is a contradiction.

By induction, for any good $z_{l}$ the set $E_{z_{l}}$ contains a $(d-1)$ dimensional subspace. There are only a finite number of possible $(d-1)$ dimensional subspaces in $[k]^{m^{2}}$, and we call that number $T(m)$. By our choice of $m=MQDHJ_{k}(\epsilon/2,d-1)$, for any given good $z_{l}$, the set $E_{z_{l}}$ must have a $(d-1)$ dimensional subspace. 

For each $(d-1)$ dimensional subspace $\sigma$ in the upper left set of squares of cardinality $k^{m^{2}}$ construct the set $G_{\sigma}=\{ z_{l}:(x,z_{l})\in E  \ \  \forall x\in \sigma \}$ which is a subset of the lower right square of dimension $(N-m)\times (N-m)$. The union of the sets $G_{\sigma}$ over all possible $(d-1)$-dimensional subspaces covers the set of good $z_{l}$'s. Since the set of good strings has a density at least $\epsilon/2$ within $[k]^{(N-m)^{2}}$, at least one of the sets $G_{\sigma_{0}}$ has density at least $\frac{\epsilon}{2T(m)}$ within $[k]^{(N-m)^{2}}$. Assuming the quadratic density Hales Jewett statement to be true, we can choose $N-m> QDHJ_{k}(\frac{\epsilon}{2T(m)})$ and then we would get a 1-dimensional subspace $\lambda$ in the lower right set of squares, and then the subspace $\sigma_{0}\times \lambda$ would suffice for our purposes.
\end{proof}

\section{Symmetric differences and addition in the vector space $\mathbb{F}_{2}^{n^{2}}$}

Taking symmetric differences in our case is equivalent to the properties of addition in the vector space $\mathbb{F}_{2}^{n^{2}}$ over the field $\mathbb{F}_{2}$. Henceforth, we talk about addition in $\mathbb{F}_{2}^{n}$ in place of symmetric differences. We can get exactly $M=2^{m}-1$ many elements in a vector subspace upon addition in $\mathbb{F}_{2}^{n^{2}}$, when considering a vector subspace of $\mathbb{F}_{2}^{n^{2}}$ with $m$ basis vectors.

Let $m\leq n^{2}$. We consider any arbitrary $m$ elements $a_1,a_2,\dots, a_m$ among which there are no nontrivial relations through symmetric differences, this being possible since $\mathbb{F}_{2}^{n^2}$ is $n^{2}$ dimensional and $m\leq n^{2}$.

Considering all the sums from the basis set $\{a_1,\dots,a_m\}$, with coefficients in $\mathbb{F}_2$, we get exactly $M=(2^{m}-1)$ non zero elements. We enumerate these elements as $a_1,a_2,\dots,a_M$ keeping the first $m$ terms as before. This is a finite dimensional vector subspace with the sumset taking as many values as the number of elements in the subspace: for each $a_{k}, 1\leq k \leq M$, we have a unique chain of $2^{m-1}-1$ pairs of elements that add up to $a_k$; for $a_{k}$ and any other $a_l, l\neq k, 1\leq l\leq M$, the element $(a_{k}-a_{l})\in \mathbb{F}_{2}^{n^{2}}$ is added to $a_{l}$ to get $a_{k}$.\footnote{The question of finding dense subsets of $\mathbb{F}_{2}^{n}$ and checking whether the sumset has a large vector subspace, has been studied and there is an example of a set of density $1/4$ whose sumset does not contain a subspace of dimension $\sqrt{n}$. \cite{Green} } 

\section{Proof of \cref{thm:mainthm1}}
Within our $[n]^{2}$ grid, the first coordinate increases downwards, starting from 1, along a column when the second coordinate is fixed, and along a row the second coordinate increases, starting from 1, for a fixed value of the first coordinate. Given any $\delta=\frac{1}{2^{k}}$, in each case below we construct  specific subsets of $\{0,1\}^{n^2}$ of cardinality $\delta \cdot 2^{n^{2}}=2^{n^{2}-k}$. 

By a \textit{coordinate point} in $\{0,1\}^{n^2}$, we mean an element of the form $(x,y)\in \{0,1\}^{n^2}$ with $0\leq x,y\leq n$. Alternately, we also call such an element $(x,y)$ as a single element subset of $[n]^2$.

We recall that given two elements $a_1,a_2 \in \mathbb{F}^{n^2}_2$, the coordinate points in $\{0,1\}^{n^2}$ where they differ are precisely those coordinate points where the element  $(a_1 +a_2 )\subset \mathbb{F}^{n^2}_2$ takes the value $1$. 

We can construct $(n^{2}-k)$ many linearly independent subsets of $\mathbb{F}_{2}^{n^2}$ over $\mathbb{F}_{2}$, and get $2^{n^2 -k}-1$ many non-zero elements.

\begin{proof}[Proof of Theorem 6]

(i): This was already articulated in the introduction. We simply consider the set of elements of $\{0,1\}^{n^2}$ with an even number of $1$'s. Then, upon taking symmetric differences, we always have sets with an even number of $1$'s. Thus, any possible difference set also has an even number of elements.

(ii): In this case, consider a vector subspace consisting of all the coordinate points in $\{0,1\}^{n^2}$, except for the diagonal points $\{a_1,a_1\},\{a_2,a_2\},\dots,\{a_k,a_k\}$. In this case, it is clear that one cannot construct a combinatorial line which contains any one of the points from the set $S$.

(iii): For the case where $k=2k'$ is even, we pair off the elements of the diagonal as $\big\{(a_1,a_1),(a_2,a_2)\}\dots,\{(a_{2k'-1},a_{2k'-1}),(a_{2k'},a_{2k'})\}$. We consider a vector subspace where each coordinate point in  $\{0,1\}^{n^2}$ other than the ones considered already in the pairing above, are taken as basis elements. Thus, in the process of taking symmetric differences, we will not be able to construct a wildcard set $\gamma\times \gamma$ avoiding at least one pair of consecutive elements of $S$ . We have clearly constructed a subspace with $2^{n^2-k'}$ elements, and thus a subspace of density $\frac{1}{2^{\lfloor \frac{k}{2} \rfloor}}$. In the case where $k=2k'+1$ is odd, we again pair off the following elements of the diagonal set: $\big\{(a_1,a_1),(a_2,a_2)\}\dots,\{(a_{2k'-1},a_{2k'-1}),(a_{2k'},a_{2k'})\}$ and exclude the coordinate point $\{(a_{2k' +1},a_{2k' +1})\}$, and the result follows with this subspace also having density $\frac{1}{2^{\lfloor \frac{k}{2}\rfloor}}$.

(iv) Consider the set of all elements of $\{0,1\}^{n^2}$ that have an even number of elements in the diagonal; which is clearly a set of density $\frac{1}{2}$. Within this set, consider further the subset that has an even number of elements in the upper triangular part of the $[n]^{2}$ grid. Thus this set has a density of $\frac{1}{4}$ in $\{0,1\}^{n^2}$. In order to get a wildcard set, we would only get an even number of elements on the diagonal. Further we could only have an even number of elements in the upper triangular part of the grid. If we considered $4k+2$ many elements on the diagonal, then we would have to consider $1+2+\dots+(4k+1)=(2k+1)(4k+1)$ many points in the upper triangular grid, and this is a contradiction. Thus the only possibility is to get wildcards sets $\gamma \times \gamma$ with $|\gamma|$ divisible by 4. 

(v) Consider the basis set in this case, which has the following three types of elements: firstly the elements $\{(x,y)\}, \forall x,y\in [n]$ where $ x>y$, then the elements of the form $\{(t,t),(t+1,t+1)\}$ with $t\in [n-1]$, and further the elements of the form $\{(x,y),(x,y+1)\}$ when $x\leq y$ and when $(y+1)\leq n$. Consider any wildcard set of form $\gamma\times \gamma$ with $|\gamma|=k$ that is obtained, and enumerate the elements in $\gamma$ in increasing order as $\{t_1,\dots,t_k\}$. In order to include the point $(t_k,t_k)$, we have to include all the 2-element basis sets on the diagonal; $\{(t_{k}-1,t_{k}-1),(t_k,t_k)\}+\{(t_{k}-2,t_{k}-2),(t_{k}-1,t_{k}-1)\}+\dots+\{(t_{k-1},t_{k-1}),(t_{k-1}+1,t_{k-1}+1)\}=\{(t_{k-1},t_{k-1}),(t_k,t_k)\}$. Further, to include the point $(t_{k-1},t_{k})$ the only option we have is to include the basis sets $\{(t_{k-1},t_{k-1}),(t_{k-1},t_{k-1}+1)\},\dots, \{(t_{k-1},t_{k}-1),(t_{k-1},t_{k})\}$ which add up to the set $\{(t_{k-1},t_{k-1}),(t_{k-1},t_{k})\}$.

 Till now the point $(t_{k-1},t_{k-1})$ was added exactly twice and thus has been excluded. The only option we now have is to further include all the two element diagonal elements starting from $(t_{k-2},t_{k-2})$ till $(t_{k-1},t_{k-1})$, (in a  way similar to how we included all the elements from $(t_{k-1},t_{k-1})$ till $(t_{k},t_{k})$ in the previous paragraph). We would now have to include the elements $\{(t_{k-2},t_{k-1}),(t_{k-2},t_k)\}$ by adding up a suitable number of basis elements in the upper triangular region. This would be permissible as a wildcard set of shape $\gamma\times \gamma$ with $|\gamma|=3$. We can extend this to possibly have $|\gamma|=4$ where we have four further points to account for in the same row with the first coordinate $t_{k-3}$, which can be done obviously as before, and this is the only way to account for the point $(t_{k-3},t_{k-3})$ (If we accounted for the point $(t_{k-3},t_{k-3})$ by covering it with the set of the form $\{(t_{k-3}-1),(t_{k-3}-1),(t_{k-3},t_{k-3})\}$ and so forth along the diagonal, then we would be left to cover three elements in the upper triangular part in the row $t_{k-3}$ which is impossible with our basis elements). So in this way, we can get a wildcard set of size either $3$ or $4$. It is now clear that we can extend the argument to get wildcard sets only of size $4k$ or $4k-1$ for any $k\geq 2$. We have one element basis sets in the lower triangular part of the grid, and so we can add each required coordinate point. It is also obvious that we have exactly $2^{n^{2}-1}$ many elements in this subspace, and thus is a subspace of density $\frac{1}{2}$.
 
 (vi) In this case, we choose for the basis, all the permissible two element sets of the form $\{(a,b),(a-1,b+1)\}$ with $a+b=n+1$, which are all the two element subsets of the ``reverse" diagonal. Every other coordinate point is considered as a basis element. This set has density $\frac{1}{2}$. It is clear that we must attain at least one element from this 'reverse diagonal' set when taking linear combinations of these basis sets, and thus it is clear that whenever $j\in \gamma$ we must have that $(n+1-j)\in \gamma$ as well. Further when $n$ is even, and thus $n+1$ is odd, we have exact pairings of distinct elements and there are only an even number of elements of this reverse diagonal one can cover, and thus we must have $4|n$.
 
 (vii) In this case, we consider the consecutive two element basis sets of the type we considered in the previous example, but on the diagonal line in the upper triangular part of the grid parametrized by the line $y=x+t$, and every other single point on the grid is also taken as a basis point. It is clear that on this diagonal line parametrized by $y=x+t$ we must have attained the coordinate point $(a,a+t)$ where $a$ is the least point of $\gamma$ , as well as for the largest element $b\in \gamma$, the point $(b-t,b)$ being covered. This subspace also clearly has $2^{n^{2}-1}$ elements and thus is a subset of density $\frac{1}{2}$.

\end{proof}

\section{Rectangular and square wildcard sets}

\begin{proof}[Proof of \cref{thm:mainthm2}]
Observe that $(\gamma_a \times \gamma_b)+(\gamma_a \times \gamma_c)=\gamma_a \times (\gamma_b +\gamma_c)$ where addition is in $\mathbb{F}_{2}^{n^2}$.

Consider for any fixed $\gamma$, all the nonempty sets of the form $\gamma_{i}:=\gamma\times t_i$ with $t_i\subset \gamma$. There are $2^{n}-1$ many such sets. Suppose we have a subset $S\subset \{0,1\}^{n^2}$ with density $\delta$, whose elements we enumerate as $s_1.s_2,\dots,s_{|S|}$. If there exist some $\gamma_i, \gamma_j$ such that $\gamma_i+s_{t}=b, \gamma_j +s_{l}=b$ for the same fixed $b\in \{0,1\}^{n^2}$, then $\gamma_i+\gamma_j=\gamma\times (t_i+t_j)=s_{l}+s_{t}$ (Clearly $\gamma\times (t_i+t_j)$ is of the form $\gamma\times (t_k)$ for some nonzero $t_k\subset \gamma$). Such a thing is forced for values of $n \geq \lceil \log_{2}\big( \frac{1}{\delta} +1  \big)\rceil$: consider the set of all pairs of elements $(\gamma_i,s_{t})$ of which there are $(2^{n}-1)\cdot \delta\cdot 2^{n^2}$ many. If for any pair $(\gamma_{i},s_{t})$ we have $\gamma_i +s_t =s_j$ for some $j\in [S]$, we are done since then $s_i +s_j= \gamma_i$. Otherwise each of these pairs sum to some element in $\{0,1\}^{n^{2}}\setminus S $, and there are $(1-\delta)\cdot 2^{n^2}$ many such elements. Thus there has to be some fixed $b$ to which two pairs sum to, in which case again we are done as before.
\end{proof}

We could specialise the above argument where the rectangular difference set obtained actually has a form $\gamma_1 \times \gamma_2$ with $|\gamma_1|=|\gamma_2|$ but where the sets $\gamma_1$ and $\gamma_2$ are not necessarily equal. That is the content of the first part of the following theorem\footnote{After the completion of this work, it came to the notice of the author that this theorem has also been mentioned in a different language, in the end of the blog post of \cite{Gowersblog}.}. Also, if instead of square wildcard sets, we require wildcard sets that are the disjoint unions of two square wildcard sets, i.e. of the form $\gamma_1\times\gamma_1 +\gamma_2\times\gamma_2$ with $\gamma_1,\gamma_2\subset [n]$, then again the proof follows with essentially the same basic argument as above.

\begin{theorem}\label{thm:mainthm3}
(i)For any $\delta>0$ and even integer $l=2l_1>0$, there exists $N(\delta)= \lceil \frac{l}{2\delta}  \rceil$ such that for all $n\geq N(\delta)$, for any nonempty $\gamma_1\subset [n]$ with $|\gamma_1|=l$, we have for any subset $S$ of $\{0,1\}^{n^{2}}$ with $|S|\geq \delta\cdot 2^{n^{2}}$, two elements $s_{1},s_{2}\in S$ such that their difference set is of the form $\gamma_{1} \times \gamma_{2}$ with $|\gamma_2|=l$ as well.

(ii) For any $\delta>0$ and integer $l>0$, there exists $N(\delta)= \lceil \frac{l}{\delta}  \rceil$ such that for all $n\geq N(\delta)$, we have for any subset $S$ of $\{0,1\}^{n^{2}}$ with $|S|\geq \delta\cdot 2^{n^{2}}$, two elements $s_{1},s_{2}\in S$ such that their difference set is of the form $\gamma_1\times\gamma_1 +\gamma_2\times\gamma_2$ with $\gamma_1,\gamma_2\subset [n]$ with $|\gamma_1|=|\gamma_2|=l$.
\end{theorem}

\begin{proof}[Proof of \cref{thm:mainthm3}]

(i) Consider all pairs of elements of the form $(\gamma_1 \times \Tilde{\gamma},s)$ where $\Tilde{\gamma}$ are all the $l_1$-element subsets of $[n]$ of the form $\{tl_1+1,\dots,(t+1)l_1\}$ for $0\leq t<\frac{n}{l_1}$, and $s\in S$, where $n\geq N(\delta)$ and $S$ is as in the statement of the theorem. Clearly we have $(\frac{n}{l_1})\cdot\delta\cdot 2^{n^{2}}$ many distinct such pairs. In this case, when $\frac{n\delta}{l_1}\geq 1$, i.e. when $n\geq N(\delta)$, we must have that two distinct pair sums are the same, and then it is clear that we will find $s_1+s_2=\gamma_1\times \gamma_2$ with $\gamma_2=\Tilde{\gamma_1}+\Tilde{\gamma_2}$ with both $\Tilde{\gamma_1},\Tilde{\gamma_2}$ consecutive $l_1$ element subsets.

(ii) In this case, we consider pairs $(\gamma\times \gamma,s)$ with $\gamma\subset [n]$ of the form $\{tl_1+1,\dots,(t+1)l_1\}$ for $0\leq t<\frac{n}{l}$, and $s\in S$ and the argument proceeds as above.
\end{proof}

The stronger version of the first part of the above theorem would say that dense subsets of $\{0,1\}^{n^{2}}$ would contain a combinatorial line such that their corresponding difference set is of the form $\gamma_1\times \gamma_2$ with $|\gamma_1|=|\gamma_2|=k$ for any a-priori given positive integer $k$. Following the argument after Conjecture 2 in the introduction, this would also imply Sarkozy's theorem, but notably this will not generalize the coloring version of the quadratic Hales Jewett theorem where we must require $\gamma_1=\gamma_2$ as subsets of $[n]$

\section{Further questions.}

We state three natural conjectures below of which the first two are weaker versions of the main density quadratic Hales Jewett conjecture.

\textit{Conjecture 1:} For any $0<\delta<1$, there exists $N(\delta)$ such that for any $n\geq N(\delta)$, for any subset $S\subset \{0,1\}^{n^{2}}$ we must have two elements $s_1,s_2\in S$ such that their difference set is of the form $\gamma\times \gamma$ with $\gamma \subset [n]$.  

\bigskip

\textit{Conjecture 2:} For any $0<\delta<1$, there exists $N(\delta)$ such that for any $n\geq N(\delta)$, for any subset $S\subset \{0,1\}^{n^{2}}$ we must have two elements $s_1,s_2\in S$ such that their difference set is of the form $\gamma_1\times \gamma_2$ with $\gamma_1,\gamma_2 \subset [n]$ and further $\gamma_1\times \gamma_2$ is the wildcard set of a corresponding combinatorial line .  

\bigskip

We define a restriction on $[n]$, as the property that in any nonempty subset of $[n]$, a given pattern or sequence of elements is forbidden from appearing. As an example, say that a subset $S\subset [n]$ satisfies a specific restriction of there being no single element in isolation in the subset $S\subset [n]$, which means there cannot exist any $a\in S$ such that $a-1\notin S,a+1\notin S$. Another example is where we have a restriction that the cardinality of the given subset $S\subset [n]$ is not divisible by any given integer.
Each part in Theorem 6 involves a particular restriction.

\textit{Conjecture 3:} Given any specific restriction, there exist dense vector subspaces of $\{0,1\}^{n^{2}}$ for any large enough $n$, so that the wildcard set $\gamma\subset [n]$ corresponding to a combinatorial line satisfies the restriction.

\bigskip

In fact, one would expect to extend Conjecture 1 and for any given $\delta$, quantify a large enough proportion of pairs of elements so that their difference set is of the square form. Out of that quantifiably large proportion of pairs one would expect to find a pair which actually forms a combinatorial line. 
 
Conjecture 2 is a strengthening of Theorem 7 where we require to find a pair of elements forming a combinatorial line with the rectangular wildcard set.

In case of just the square wildcard sets of the form $\gamma \times \gamma$, we do not have the simple case of two square wildcard sets adding up to a square wildcard set, as was the case in the corresponding problem of the rectangular wildcard sets in Theorems 7 and 9. In this case, for any set $\gamma \subset \{1,2,\dots n\}$ with $|\gamma|\geq 3$, we would need to add up several more sets to get zero. For this set $\gamma$, consider the set of all nonempty subsets $\mathscr{P}_{\gamma}$ of $\gamma$. Consider the sets $\{a\times a : a \subset \mathscr{P}_{\gamma} \}$. Upon adding all the elements of this set we would get 0: every $(i,i)$ inside $\gamma \times \gamma$ gets counted exactly $(2^{|\gamma |-1}-1)+1$ many times, and every pair $(i,j)\in \gamma \times \gamma$ with $i\neq j$ gets counted $(2^{|\gamma|-2}-1)+1$ many times, both even for $|\gamma|>3$. One can also consider for some disjoint $\gamma_1,\gamma_2 \subset \{1,2,\dots n\}$, the sets $\{(a\cup \gamma_{2})\times (a\cup \gamma_2): a\subset \mathcal{P}_{\gamma_1}\}$, where $\mathcal{P}_{\gamma_1}$ is the set of all subsets (including the empty set) of $\gamma_{1}$, with again $|\gamma_{1}|\geq 3$. In this case too, upon adding all the elements we would get $0$. Building on this would be a significantly more challenging problem which remains for future exploration.

Conjecture 3 would likely involve extending the methods of Theorem 6 to find subspaces that force any possible given restriction on the wildcard sets. It would be interesting to classify the kinds of restrictions that are actually not permissible.

\section{Acknowledgements}

The author is thankful to Daniel Glasscock for pointing out the quadratic density Hales Jewett conjecture to the author, and also to Tim Austin for useful feedback on an earlier version of the manuscript. The author also acknowledges useful discussions with John Griesmer, and his pointing out an error in an earlier draft.

\end{document}